\newtheorem{thm}{Theorem}[section]
\newtheorem{lem}[thm]{Lemma}
\theoremstyle{definition}
\newtheorem{defn}[thm]{Definition}
\renewcommand{\Re}{\mathbb R}
\renewcommand{\epsilon}{\varepsilon}
\newcommand{\Red}{\Re^d}
\newcommand{\FF}{\mathcal F}
\newcommand{\st}{\; : \; }
\renewcommand{\phi}{\varphi}
\newcommand{\vol}[1]{\operatorname{vol}\left(#1\right)}
\newcommand{\bd}{\operatorname{bd}}
\newcommand{\conv}{\operatorname{conv}}
\newcommand{\prob}{\operatorname{Prob}}
\newcommand{\noshow}[1]{}
\title{Approximating a convex body by a polytope using the epsilon-net theorem}
\author[M. Nasz\'odi]{M\'arton Nasz\'odi}
\address{
M\'arton Nasz\'odi,
ELTE, Dept. of Geometry,
Lorand E\"otv\"os University,
P\'azm\'any P\'eter S\'et\'any 1/C
Budapest, Hungary 1117
}
\email{marton.naszodi@math.elte.hu}
\keywords{approximation by polytopes, convex body, epsilon-net theorem, 
Gr\"unbaum's theorem, VC-dimension}
\subjclass[2010]{52A27, 52A20}
\begin{document}
\begin{abstract}
Giving a joint generalization of a result of Brazitikos, Chasapis and Hioni and 
results of Giannopoulos and Milman, we prove that roughly 
$\left\lceil \frac{d}{(1-\vartheta)^d}\ln\frac{1}{(1-\vartheta)^d} \right\rceil$
points chosen uniformly and independently from a centered convex body $K$ in ${\mathbb R}^d$ yield 
a polytope $P$ for which 
$\vartheta K\subseteq P\subseteq K$ holds with large probability. The proof is 
simple, and relies on a combinatorial tool, the $\varepsilon$-net theorem.
\end{abstract}

\maketitle

\section{Introduction}

A convex body (i.e., a compact convex set with non-empty interior ) in $\Red$ 
is called \emph{centered}, if its center of mass is the origin.

We study the following problem. Given a convex body $K$ in $\Red$, a 
positive integer $t\geq d+1$, and $\delta,\vartheta\in(0,1)$. We want to show 
that under some assumptions on the parameters $d,t,\delta,\vartheta$ (and 
without assumptions on $K$), the convex hull of $t$ randomly, uniformly and 
independently chosen points of $K$ contains $\vartheta K$ with probability at 
least $1-\delta$.

The main result of \cite{BCH16} concerns the case of very rough approximation, 
that is, where the number $t$ of chosen points 
is linear in the dimension $d$. It states that the convex hull of 
$t=\alpha d$ random points in a centered convex body $K$ is a polytope $P$ 
which satisfies $\frac{c_1}{d}K\subseteq P$, with probability 
$1-\delta=1-e^{-c_2 d}$, where $c_1,c_2>0$ and $\alpha>1$ are absolute 
constants.

Our first result is a slightly stronger version of this statements, where the 
three constants are made explicit.
\begin{thm}\label{thm:brazitikos}
Let $K$ be a centered convex body in $\Red$. 
Choose $t=500d$ points $X_1,\ldots,X_t$ of $K$ randomly, independently and 
uniformly. Then 
\begin{equation*} 
\frac{1}{d} K\subseteq \conv \{X_1,\ldots,X_t\}\subseteq K.
\end{equation*}
with probability at least $1-1/e^d$.
\end{thm}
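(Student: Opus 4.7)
The inclusion $\conv\{X_1,\ldots,X_t\}\subseteq K$ is automatic from $X_i\in K$ and convexity of $K$, so I focus on $\frac{1}{d}K\subseteq \conv\{X_1,\ldots,X_t\}$. I recast this as an $\varepsilon$-net condition: by separation, the inclusion fails iff there is a closed half-space $H^+$ of $\Red$ meeting $\frac{1}{d}K$ but missing every $X_i$. Introduce the range space $(K,\FF)$ with ground set $K$, normalized Lebesgue measure, and ranges $\FF=\{K\cap H^+ \st H^+ \text{ a closed half-space of }\Red\}$. Since half-spaces in $\Red$ have VC-dimension $d+1$, so does $\FF$. The theorem will follow once I show (a) every such $H^+$ satisfies $\vol{K\cap H^+}\geq \varepsilon_0\vol{K}$ for a universal constant $\varepsilon_0 > 0$, and (b) $t=500d$ suffices, via the $\varepsilon$-net theorem, to make the random sample an $\varepsilon_0$-net with probability $\geq 1-e^{-d}$.

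For (a), write $H^+=\{x\st\iprod{x}{u}\geq s\}$ with $u\in\Sed$; the meeting condition becomes $s\leq h_K(u)/d$, where $h_K$ is the support function. If $s\leq 0$, Gr\"unbaum's inequality gives $\vol{K\cap H^+}\geq(d/(d+1))^d\vol{K}$ directly. For $0<s\leq h_K(u)/d$, I pick $w\in K$ with $\iprod{w}{u}=h_K(u)$ and apply the homothety $T(x)=\lambda x+(1-\lambda)w$ with $\lambda=1-s/h_K(u)\geq 1-1/d$. Since $w\in K$ and $K$ is convex, a short calculation shows that $T$ maps the Gr\"unbaum cap $K\cap\{\iprod{x}{u}\geq 0\}$ into $K\cap H^+$, giving
\[
\vol{K\cap H^+}\;\geq\;\lambda^d\,(d/(d+1))^d\vol{K}\;\geq\;\left(\frac{d-1}{d+1}\right)^d\vol{K}.
\]
The last expression is increasing in $d\geq 2$ with minimum $1/9$ at $d=2$ (and limit $e^{-2}$), so $\varepsilon_0=1/9$ is a valid universal constant.

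For (b), I feed $\varepsilon_0=1/9$, VC-dimension $d+1$, and failure probability $\delta=e^{-d}$ into a quantitative form of the $\varepsilon$-net theorem (for example Haussler--Welzl), producing a sample-size threshold linear in $d$, of shape $c_1 d\log c_2 + c_3 d$ with explicit constants. The final task is the numerical verification that $500d$ exceeds this threshold for every $d\geq 2$. I expect this numerical step to be the main obstacle: the constant $500$ is tuned to the sharpest convenient form of the $\varepsilon$-net theorem, and for small $d$ the lower-order terms in the threshold are comparable in magnitude to the leading term $c_1 d$, so the small-$d$ regime must be checked rather than dismissed by an asymptotic argument.
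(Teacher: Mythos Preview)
Your approach is correct and matches the paper's: the paper likewise combines a Gr\"unbaum-type lower bound on the volume of caps meeting $\vartheta K$ (Lemma~\ref{lem:grunbaumstable}, proved by exactly the homothety-to-a-boundary-point trick you describe) with the $\varepsilon$-net theorem for half-spaces (Lemma~\ref{lem:epsilonnet}). The only cosmetic differences are that the paper uses the cruder Gr\"unbaum constant $1/e$ rather than your sharp $(d/(d+1))^d$, and packages the argument for general $\vartheta$ as Theorem~\ref{thm:approxgeneral}, recovering Theorem~\ref{thm:brazitikos} by the substitution $\vartheta=1/d$, $\delta=e^{-d-1}$, $C=7$.
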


Another instance of our general problem is Theorem~5.2 of \cite{GM00}, which 
concerns fine approximation, that is, where the number $t$ of chosen points 
is exponential in the dimension $d$. It states that
for any $\delta,\gamma\in(0,1)$, if we choose $t=e^{\gamma d}$ 
random points in any centered convex body $K$ in $\Red$, then the polytope $P$ 
thus obtained satisfies $c(\delta)\gamma K\subseteq P$, with probability 
$1-\delta$. 

The same argument yields Proposition~5.3 of \cite{GM00}, according 
to which for any $\delta,\vartheta\in(0,1)$, if we choose 
$t=c(\delta)\left(\frac{c}{1-\vartheta} \right)^d$ random points in any 
centered convex body $K$ in $\Red$, then the polytope $P$ thus obtained 
satisfies $\vartheta K\subseteq P$, with probability $1-\delta$. 

Our main result is the following.
 
\begin{thm}\label{thm:approxgeneral}
Let $\delta, \vartheta\in (0,1)$, and let $K$ be a centered convex body in 
$\Red$. Let
\[
 t:=\left\lceil
 C\frac{(d+1)e}{(1-\vartheta)^d}\ln\frac{e}{(1-\vartheta)^d}
 \right\rceil,
\]
where $C\geq 2$ is such that
\begin{equation*}
C^2\left(\frac{(1-\vartheta)^d}{e}\right)^{C-2}\leq 
\frac{(\delta/4)^{1/(d+1)}}{e^3}.
\end{equation*}
Choose $t$ points $X_1,\ldots,X_t$ of $K$ randomly, independently and 
uniformly. 
Then
\begin{equation*} 
\vartheta K\subseteq \conv \{X_1,\ldots,X_t\}\subseteq K
\end{equation*}
with probability at least $1-\delta$.
\end{thm}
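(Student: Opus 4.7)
The plan is to combine a standard volume-of-caps lemma for centered convex bodies with the $\varepsilon$-net theorem, applied to the range space of half-space cuts of $K$. Set $P=\conv\{X_1,\ldots,X_t\}$. The inclusion $P\subseteq K$ is immediate. By separation, $\vartheta K\not\subseteq P$ if and only if some open half-space $H^-$ meets $\vartheta K$ while avoiding every $X_i$. It therefore suffices to guarantee that, with probability at least $1-\delta$, the random sample hits every open half-space $H^-$ for which $H^-\cap\vartheta K\neq\emptyset$.

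The key geometric lemma I would establish is that, for every such half-space,
\[
\vol{K\cap H^-}\geq \frac{(1-\vartheta)^d}{e}\vol{K}.
\]
Write $H^-=\{x:\iprod{u}{x}>\alpha\}$ with $\alpha<\vartheta h_K(u)$, pick an apex $p\in K$ with $\iprod{u}{p}=h_K(u)$, and consider the affine contraction $T(x):=\vartheta p+(1-\vartheta)x$. Convexity gives $T(K)\subseteq K$, while $\iprod{u}{T(x)}=\vartheta h_K(u)+(1-\vartheta)\iprod{u}{x}>\alpha$ as soon as $\iprod{u}{x}\geq 0$. Hence $T$ sends $K_+:=K\cap\{\iprod{u}{x}\geq 0\}$ into $K\cap H^-$, and since its Jacobian has absolute value $(1-\vartheta)^d$, we obtain $\vol{K\cap H^-}\geq (1-\vartheta)^d\vol{K_+}$. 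Gr\"unbaum's theorem, applied to the centered body $K$ and the hyperplane through the origin with normal $u$, then gives $\vol{K_+}\geq (d/(d+1))^d \vol{K}\geq \vol{K}/e$, proving the lemma.

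Now the range space with ground set $K$ and ranges all intersections of $K$ with open half-spaces has VC-dimension $d+1$. Equipping $K$ with normalized Lebesgue measure, the lemma shows that every range relevant to the bad event has measure at least $\varepsilon:=(1-\vartheta)^d/e$. A quantitative form of the $\varepsilon$-net theorem (Haussler--Welzl, in a sharp enough version) guarantees that a sample of $t$ i.i.d.\ uniform points from $K$ is an $\varepsilon$-net with failure probability at most $\delta$ as soon as $t\gtrsim \frac{d+1}{\varepsilon}\ln\frac{1}{\varepsilon}$, provided the implicit constant is chosen large enough in terms of $\delta$. Substituting $\varepsilon=(1-\vartheta)^d/e$ reproduces the displayed sample size $t$ in the theorem, and the stated condition $C^2((1-\vartheta)^d/e)^{C-2}\leq (\delta/4)^{1/(d+1)}/e^3$ is precisely what the failure bound of the chosen form of the $\varepsilon$-net theorem reduces to in this setting.

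The geometry in the middle step is routine once Gr\"unbaum is available; the one substantive obstacle is the quantitative bookkeeping in the final step, namely picking the right quantitative version of the $\varepsilon$-net theorem and inverting its failure-probability estimate so that the condition on $C$ emerges exactly in the form stated in the theorem.
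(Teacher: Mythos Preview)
Your proposal is correct and follows essentially the same route as the paper: the homothety $x\mapsto \vartheta p+(1-\vartheta)x$ combined with Gr\"unbaum's inequality is exactly the paper's Lemma~\ref{lem:grunbaumstable}, and the application of the quantitative $\varepsilon$-net theorem to the VC-class of half-space caps with $\varepsilon=(1-\vartheta)^d/e$ is exactly the paper's proof of Theorem~\ref{thm:approxgeneral}. The only cosmetic difference is that the paper restricts to half-spaces supporting $\vartheta K$ from outside rather than all open half-spaces meeting $\vartheta K$, and the bookkeeping you flag as the remaining obstacle is handled in the paper by quoting a specific form of the Koml\'os--Pach--Woeginger bound (Lemma~\ref{lem:epsilonnet}).
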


By substituting $\vartheta=\frac{1}{d}, \delta=e^{-d-1}, C=7$,
we obtain Theorem~\ref{thm:brazitikos}.

By substituting $C=3$, we obtain the two results of \cite{GM00} mentioned above.

In Section~\ref{sec:grunbaum}, we present a generalization of a classical 
result of Gr\"{u}nbaum \cite{Gru60}, according to which any halfspace 
containing the center of mass of a convex body contains at least $1/e$ of its 
volume. In Section~\ref{sec:epsilonnet}, we give a specific form of the 
$\varepsilon$-net theorem, a result from combinatorics obtained by Haussler and 
Welzl \cite{HW87} building on works of Vapnik and Chervonenkis \cite{VC68}, and 
then refined by Koml\'os, Pach and Woeginger \cite{KPW92}.
Finally, in Section~\ref{sec:proofofapproxthm}, we combine these two to obtain 
Theorem~\ref{thm:approxgeneral}.

\section{Convexity: A stability version of a theorem of 
Gr\"unbaum}\label{sec:grunbaum}

\emph{Gr\"unbaum's theorem} \cite{Gru60} states that for any centered convex 
body 
$K$ in $\Red$, and any half-space $F_0$ that contains the origin we have 
\begin{equation}\label{eq:grunbaum}
\vol{K}/e \leq \vol{K\cap F_0},
\end{equation}
where $\vol{\cdot}$ denotes volume.

We say that a half-space $F$ \emph{supports $K$ from outside} if the boundary 
of the half-space intersects $\bd K$, but $F$ does not intersect the interior 
of $K$. Lemma~\ref{lem:grunbaumstable}, is a stability version of 
Gr\"unbaum's theorem.

\begin{lem}\label{lem:grunbaumstable}
 Let $K$ be a convex body in $\Red$ with centroid at the origin. Let 
$0<\vartheta<1$, and $F$ be a half-space that supports $\vartheta K$ from 
outside. 
Then
\begin{equation}\label{eq:grunbaumstable}
\vol{K}\frac{(1-\vartheta)^d}{e}
\leq
 \vol{K\cap F}.
\end{equation}
\end{lem}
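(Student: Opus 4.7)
The plan is to deduce Lemma~\ref{lem:grunbaumstable} from the classical Gr\"unbaum inequality \eqref{eq:grunbaum} by translating the bounding hyperplane of $F$ down to the origin and then controlling the ratio between the two caps via a one-dimensional argument using Brunn's concavity principle. To start, I would fix coordinates along the outward unit normal $u$ of $F$ and write $F=\{x\st\iprod{x}{u}\geq t_0\}$. Because $F$ supports $\vartheta K$ from outside, the bounding hyperplane of $F$ is a supporting hyperplane of $\vartheta K$ with outward normal $u$, so $t_0=h_{\vartheta K}(u)=\vartheta h_K(u)$. Setting $t_{\max}:=h_K(u)$ gives the key identity $t_0=\vartheta t_{\max}$. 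Letting $F_0:=\{x\st\iprod{x}{u}\geq 0\}$ be the parallel half-space through the origin, Gr\"unbaum's theorem applied to $F_0$ yields $\vol{K}/e\leq\vol{K\cap F_0}$, so it suffices to prove $\vol{K\cap F}\geq (1-\vartheta)^d\vol{K\cap F_0}$.

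Next I would express both volumes as one-dimensional integrals. Let $\phi(t)$ denote the $(d-1)$-volume of the slice $K\cap\{\iprod{x}{u}=t\}$, so that $\vol{K\cap F}=\int_{t_0}^{t_{\max}}\phi(\tau)\di\tau$ and $\vol{K\cap F_0}=\int_0^{t_{\max}}\phi(t)\di t$. Brunn's concavity principle says that $\phi^{1/(d-1)}$ is concave on $[-h_K(-u),t_{\max}]$, and $\phi(t_{\max})=0$. The decisive step is the affine change of variable $\tau=\vartheta t_{\max}+(1-\vartheta)t$, which maps $[0,t_{\max}]$ bijectively onto $[t_0,t_{\max}]$ with Jacobian $(1-\vartheta)$. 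Since $\vartheta t_{\max}+(1-\vartheta)t$ is a convex combination carrying weight $\vartheta$ on $t_{\max}$, concavity of $\phi^{1/(d-1)}$ together with $\phi(t_{\max})=0$ yields the pointwise inequality $\phi(\vartheta t_{\max}+(1-\vartheta)t)\geq (1-\vartheta)^{d-1}\phi(t)$ on $[0,t_{\max}]$. Multiplying by the Jacobian and integrating then gives $\vol{K\cap F}\geq (1-\vartheta)^d\vol{K\cap F_0}$, which combined with Gr\"unbaum proves \eqref{eq:grunbaumstable}.

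The main creative point — and really the only step where any care is needed — is choosing the right change of variables: pinning $t_{\max}$ (where $\phi$ vanishes) and contracting toward it by the factor $1-\vartheta$ is exactly what is required so that the $(1-\vartheta)^{d-1}$ produced by Brunn's principle combines with the $(1-\vartheta)$ Jacobian to give the full $(1-\vartheta)^d$ in \eqref{eq:grunbaumstable}. Beyond this, the argument is a routine one-variable computation; the only background check is that the geometric setup is well-defined, in particular that $t_0>0$, which is automatic since the origin lies in the interior of $K$ as its centroid.
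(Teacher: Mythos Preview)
Your proof is correct; the only slip is the claim $\phi(t_{\max})=0$, which can fail if $K$ has a facet orthogonal to $u$, but since $\phi(t_{\max})\geq 0$ is all the concavity step needs, this is harmless. Your route differs from the paper's, which avoids Brunn's principle altogether: it picks a point $p\in K$ on the far supporting hyperplane $\{\iprod{x}{u}=t_{\max}\}$ and observes that the homothetic copy $\vartheta p+(1-\vartheta)(K\cap F_0)$ lies in $K\cap F$ (in $K$ by convexity, in $F$ since $\iprod{\vartheta p+(1-\vartheta)x}{u}\geq \vartheta t_{\max}=t_0$ whenever $x\in F_0$), whence taking volumes gives $\vol{K\cap F}\geq(1-\vartheta)^d\vol{K\cap F_0}$ in one line. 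Both arguments encode the same geometric idea --- contracting the cap $K\cap F_0$ toward the far supporting hyperplane by the factor $1-\vartheta$ --- but the paper does it globally with a single homothety, while you slice first and then invoke Brunn to reassemble. The paper's version is more elementary (only convexity of $K$, no Brunn--Minkowski); yours is a natural analytic reformulation and delivers the same constant.
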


\begin{proof}[Proof of Lemma~\ref{lem:grunbaumstable}]
Let $F_0$ be a translate of $F$ containing $o$ on its boundary, and let $F_1$ 
be a translate of $F$ that supports $K$ from outside. Finally, let $p\in\bd 
F_1\cap K$. Then $\vartheta p+(1-\vartheta)(K\cap F_0)$ (that is, the 
homothetic copy of $K\cap F_0$ with homothety center $p$ and ratio 
$1-\vartheta$) is in $F$. Its volume is $(1-\vartheta)^d\vol{K\cap F_0}$, which 
by \eqref{eq:grunbaum}, is at least $(1-\vartheta)^d\vol{K}/e$, finishing the 
proof.
\end{proof}

\section{Combinatorics: The \texorpdfstring{$\varepsilon$}{epsilon}-net Theorem 
of Haussler and Welzl}\label{sec:epsilonnet}

\begin{defn}\label{defn:vcdim}
Let $\FF$ be a family of subsets of some set $U$. 
The \emph{Vapnik--Chervonenkis dimension} (\emph{VC-dimension}, in short) 
of 
$\FF$ is the maximal cardinality of a subset $V$ of $U$ such that $V$ is 
shattered by $\FF$, that is, $\{F\cap V\st F\in\FF\}=2^V$.

A \emph{transversal} of the set family $\FF$ is a subset $Q$ of $U$ that 
intersects each member of $\FF$.
\end{defn}

It is well known that if $U$ is any subset of $\Red$, and $\FF$ is a family of 
half-spaces of $\Red$, then the VC-dimension of $\FF$ is at most $d+1$.

The $\varepsilon$-net Theorem was first proved by 
Vapnik and Chervonenkis \cite{VC68}, then refined and applied in the geometric 
settings by Haussler and Welzl \cite{HW87}, and further improved by Koml\'{o}s, 
Pach and Woeginger \cite{KPW92}. We restate Theorem~3.1 of \cite{KPW92}.

\begin{lem}[$\varepsilon$-net Theorem]\label{lem:epsilonnet}
Let $0<\varepsilon<1/e$, and let $D$ be a positive integer. Let 
$\FF$ be a family of some measurable subsets of a probability space $(U, 
\mu)$, where the probability of each member $F$ of $\FF$ is 
$\mu(F)\geq\varepsilon$. Assume that the VC-dimension of $\FF$ is at most $D$. 
Let $t$ be 
\[
 t:=\left\lceil
 C\frac{D}{\varepsilon}\ln\frac{1}{\varepsilon}
 \right\rceil,
\]
where $C\geq 2$ is such that
\begin{equation*}
C^2\varepsilon^{C-2}\leq \frac{(\delta/4)^{1/D}}{e^2}.
\end{equation*}

Choose $t$ elements $X_1,\ldots,X_t$ of $V$ randomly, independently according 
to $\mu$. 
Then $\{X_1,\ldots,X_t\}$ is a transversal of $\FF$ with probability at 
least $1-\delta$.
\end{lem}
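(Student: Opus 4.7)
The plan is to follow the classical double-sampling argument of Vapnik and Chervonenkis, combined with the Sauer--Shelah bound on the growth function of a family of bounded VC-dimension, and then tune the constants along the lines of Komlós, Pach and Woeginger.

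Let $E_1$ be the bad event that $\{X_1,\ldots,X_t\}$ fails to be a transversal of $\FF$, i.e.\ that some $F\in\FF$ with $\mu(F)\geq\varepsilon$ satisfies $F\cap\{X_1,\ldots,X_t\}=\emptyset$. First, I would draw an independent \emph{ghost sample} $Y_1,\ldots,Y_t$ from $\mu$ and consider the event $E_2$ that some $F\in\FF$ with $\mu(F)\geq\varepsilon$ satisfies $F\cap\{X_i\}=\emptyset$ and $|F\cap\{Y_j\}|\geq\varepsilon t/2$. For any fixed $F$ with $\mu(F)\geq\varepsilon$, the number of $Y_j$ landing in $F$ is binomial with mean at least $\varepsilon t$, so a one-sided Chebyshev (or Chernoff) estimate gives $\prob(|F\cap\{Y_j\}|\geq\varepsilon t/2)\geq 1/2$; comparing the two events on the same witness $F$ then yields $\prob(E_1)\leq 2\prob(E_2)$.

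Next, I would bound $\prob(E_2)$ by a symmetrization: condition on the multiset $S$ of all $2t$ samples and observe that, by exchangeability, the conditional assignment of $X$/$Y$ labels is a uniformly random equipartition of $S$. The Sauer--Shelah lemma bounds the number of distinct traces $\{F\cap S \st F\in\FF\}$ by $\binom{2t}{\leq D}\leq (2et/D)^{D}$. For a fixed trace of size $k\geq\varepsilon t/2$, the probability (over the relabeling) that all $k$ of its elements receive the $Y$-label equals $\binom{t}{k}/\binom{2t}{k}\leq 2^{-k}\leq 2^{-\varepsilon t/2}$, so a union bound gives $\prob(E_2)\leq (2et/D)^{D}\cdot 2^{-\varepsilon t/2}$.

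Finally, I would substitute $t=\lceil C(D/\varepsilon)\ln(1/\varepsilon)\rceil$ and verify, using the hypothesis $C^{2}\varepsilon^{C-2}\leq (\delta/4)^{1/D}/e^{2}$, that the resulting expression is at most $\delta/2$, so that $\prob(E_1)\leq\delta$. The main obstacle is precisely this final numerical step: the crude factor $2^{-\varepsilon t/2}$ above only forces $C$ to be somewhat larger than $2/\ln 2$, so to recover the sharp form stated here one needs the finer hypergeometric tail estimate used by Komlós, Pach and Woeginger in place of the $2^{-k}$ bound, together with careful handling of the $\ln(1/\varepsilon)$ factor when back-solving for $t$ and exploiting the standing assumption $\varepsilon<1/e$.
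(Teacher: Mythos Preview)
Your outline is the same double-sampling/Sauer--Shelah argument the paper invokes (it simply quotes the final inequality from Pach--Agarwal, Theorem~15.5, and then does the numerics), so the strategy is right. The one substantive discrepancy is in how you propose to reach the sharp constant. In the Koml\'os--Pach--Woeginger refinement the ghost sample is \emph{not} of the same size as the first sample; one takes a total sample of some size $T>t$ (first block of size $t$, ghost block of size $T-t$), and the symmetrization/permutation step then yields
\[
\prob(E)\;<\;2\sum_{i=0}^{D}\binom{T}{i}\left(1-\frac{t}{T}\right)^{(T-t)\varepsilon-1}.
\]
Choosing $T=\varepsilon t^{2}/D$, bounding $\sum_{i\le D}\binom{T}{i}\le (eT/D)^{D}$, and using $(\ln(1/\varepsilon))^{2}\le 1/\varepsilon$ for $\varepsilon<1/e$ leads exactly to the condition $C^{2}\varepsilon^{C-2}\le(\delta/4)^{1/D}/e^{2}$ with $t=\lceil C(D/\varepsilon)\ln(1/\varepsilon)\rceil$.

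By contrast, your balanced ghost sample forces the exponential factor to be $2^{-\varepsilon t/2}$, and a ``finer hypergeometric tail estimate'' will not rescue this: your bound $\binom{t}{k}/\binom{2t}{k}\le 2^{-k}$ is essentially sharp term-by-term in the balanced case, so the resulting constraint can only push $C$ down to about $2/\ln 2$, not to $2$. The missing idea is the free parameter $T$ and its optimization, not a sharper tail bound for the symmetric hypergeometric.
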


\begin{proof}
We give an outline of the last (routine, computational) part of the proof to 
obtain an explicit bound on the probability as stated in our lemma.

Let $E$ be the bad event, that is, when $\{X_1,\ldots,X_t\}$ is not a 
transversal of $\FF$.
At the end of the proof presented in \cite{PaAg95} (Theorem~15.5 therein), it 
is 
obtained that for any integer $T$ which is larger than $t$, we have
\[
 \prob(E)<
 2\sum_{i=0}^D \binom{T}{i} \left(1-\frac{t}{T}\right)^{(T-t)\varepsilon-1}.
\]
With the choice of $T=\frac{\varepsilon t^2}{D}$, using $\sum_{i=0}^D 
\binom{T}{i}\leq \left(\frac{eT}{D}\right)^D$, 
we obtain that if
$
\frac{e^2\varepsilon t^2}{D^2} e^{-\varepsilon t/D}< (\delta/4)^{1/D}
$
holds, then $ \prob(E)<\delta$, completing the proof of 
Lemma~\ref{lem:epsilonnet}.
\end{proof}

For more on the theory of $\varepsilon$-nets, see 
\cites{PaAg95, Ma02, AlSp16, MuVa17}.

\section{Proof of Theorem~\ref{thm:approxgeneral}}\label{sec:proofofapproxthm}
  
\begin{proof}[Proof of Theorem~\ref{thm:approxgeneral}]
 We consider the following set system on the base set $K$:
 \begin{equation*}
 \FF:=\{K\cap F\st F \mbox{ is a half space that supports } \vartheta K \mbox{ 
from outside}\}.
 \end{equation*}
Clearly, the VC-dimension of $\FF$ is at most $D:=d+1$. Let $\mu$ be the 
Lebesgue 
measure 
restricted to $K$, and assume that $\vol{K}=1$, that is, that $\mu$ is a 
probability measure. By \eqref{eq:grunbaumstable}, we have that each set in 
$\FF$ is of measure at least $\varepsilon:=\frac{(1-\vartheta)^d}{e}$. 
Lemma~\ref{lem:epsilonnet} yields that 
if we choose $t$ points of $K$ 
independently with respect to $\mu$ (that is, uniformly), then with probability 
at least $1-\delta$, we obtain a set $Q\subseteq K$ that intersects every 
member of $\FF$. The latter is equivalent to $\vartheta K\subseteq\conv{Q}$, 
completing the proof.
\end{proof}

\section*{Acknowledgements}
The author thanks Nabil Mustafa for enlightening conversations on the 
$\epsilon$-net theorem and topics around it.

The research was partially supported by the
National Research, Development and Innovation Office (NKFIH) grant 
NKFI-K119670 and by the J\'anos Bolyai Research Scholarship 
of the Hungarian Academy of Sciences. Part of the work was carried out during 
a stay at EPFL, Lausanne at J{\'a}nos Pach's Chair of Discrete and 
Computational 
Geometry supported by the Swiss National Science Foundation Grants 
200020-162884 and 200021-165977.

\bibliographystyle{amsalpha}
\bibliography{biblio}
\end{document}